\definecolor{fig}{RGB}{76,78,92}
\definecolor{shade}{gray}{0.8}
        {
          \raggedright
        \setlength{\rightmargin}{\leftmargin}
        \setlength{\itemsep}{-12pt}
        \setlength{\parsep}{20pt}
        \begin{lrbox}{\@tempboxa}%
        \begin{minipage}{\linewidth-2\fboxsep}
        }%
        {
        \end{minipage}%
        \end{lrbox}%
        \fcolorbox{black}{shade}{\usebox{\@tempboxa}}\newline
        }%
\newtheorem{theorem}{Theorem}
\newtheorem{lemma}{Lemma}
\newtheorem{cor}{Corollary}
\newtheorem{conjecture}{Conjecture}
\renewcommand{\eqref}[1]{\hyperref[#1]{(\ref*{#1})}}
\newcommand*{\norm}[1]{\lVert #1 \rVert}
\newtheorem{definition}{Definition}
\newcommand*{\pref}[1]{\hyperref[#1]{(\ref*{#1})}}
\newcommand*{\refpref}[2]{\hyperref[#2]{\ref*{#1}(\ref*{#2})}}
\newcommand{\dd}{{\mathrm d}}
\title{The Replicator Coalescent}
\author{A. E. Kyprianou\thanks{
Department of Statistics
University of Warwick
Coventry
CV4 7AL, UK. E-mail: \texttt{andreas.kyprianou@warwick.ac.uk}}
, \ L. Pe\~{n}aloza\thanks{Instituto de Investigación de Matem\'aticas y Actuaría, 
Universidad del Mar, campus Huatulco. Mexico. Email: \texttt{lizbeth@huatulco.umar.mx}},    
 and 
 T. Rogers\thanks{Department of Mathematical Sciences, University of Bath, Claverton Down, Bath, BA2 7AY. UK. Email: \texttt{ma3tcr@bath.ac.uk}}
 }
\begin{document}

\maketitle
\vspace{-0.5cm}
\begin{abstract}
 We consider a stochastic model, called the {\it replicator coalescent}, describing a system of blocks of $k$ different types which undergo pairwise mergers at rates depending on the block types: with rate $C_{ij}\geq0$ blocks of type $i$ and $j$ merge, resulting in a single block of type $i$. The replicator coalescent can be seen as generalisation of Kingman's coalescent death chain in a multi-type setting, although without an underpinning exchangeable partition structure. The name is derived from a remarkable connection  between the instantaneous dynamics of this multi-type coalescent when issued from an arbitrarily large number of blocks, and the so-called {\it replicator equations} from evolutionary game theory. By dilating time arbitrarily close to zero, we see that initially, on coming down from infinity, the replicator coalescent behaves like the solution to a certain replicator equation. Thereafter, stochastic effects are felt and the process evolves more in the spirit of a multi-type death chain.
\medskip

 \noindent{\bf Key words:} Markov chain, coalescent, coming down from infinity.

\medskip

 \noindent{\bf MSC 2020:}  60J80, 60E10.
\end{abstract}

\section{Introduction} 
In this article, we are interested in developing a multi-type analogue of Kingman's coalescent as a death chain, called a {\it replicator coalescent}, with the following interpretation. Blocks take one of $k$ different types. Mergers within blocks may take place as well as mergers of blocks with two different types. In the latter case, we will need to specify what type the two merging blocks of different type will take.  To this end, let us introduce the $k\times k$ rate matrix $\bm C = (C_{i,j})$ the merger rate matrix with $C_{i,j}\geq 0$ for all $i,j\in\{1,\cdots, k\}$. This matrix (which is not an intensity matrix) encodes the evolution of a continuous-time Markov chain, say $({\bm n}(t), t\geq0) $, on the state space
\[
\mathbb{N}^k_* = \left\{\bm \eta \in \mathbb{N}^k_0: \sum_{i=1}^k\eta_i\geq 1\right\},
\]
 where $\mathbb{N}_0 = \{0,1,2, \cdots\}$, 
 which is defined in the following way. 
 Given that ${\bm n}(t) = (n_1  , \cdots,  n_k )\in \mathbb{N}^k_*$ such that $\sum_{i =1}^k n_i > 1$:
\begin{itemize}
\item For $i\in\{1,\cdots,k\}$ any two specific blocks of type $i$ will merge at rate $C_{i,i}$, and hence a total merger rate of type $i$ blocks equal to 
$
C_{i,i}{n_i  \choose 2}.
$
\medskip

\item For $i\neq j$, both selected from  $\{1,\cdots, k\}$, any block of type $i$ will merge with any block of type $j$, producing a single block of type $i$, at rate $C_{i, j}$. The total rate of events of this type is thus $C_{i,j} n_in_j$.
\end{itemize}

\medskip

We can interpret $({\bm n}(t), t\geq 0) $  as the  evolution of the population of $k$ types that exhibit both inter-type and intra-type competition. The rate $C_{i,i}$ is the rate at which two individuals of type $i$ compete for resource resulting in one of them not surviving. Moreover, at rate $C_{i,j}$  individuals of type $i$ and $j$ encounter one another in a competition for resource, resulting in $j$ not surviving. In this respect, our replicator coalescent echos features of the so-called O.K. Corral model describing a famous 19th Century Arizona shoot-out between lawmen and outlaws in \cite{King, Vking, Vking2} as well as the (birth-)death process in \cite{BEG}. An example of the sample path of the process $\bm n$ is given in Figure \ref{fig1} in the setting $k = 3$.
\begin{figure}[h!]
  \begin{center}
\includegraphics[height=10cm]{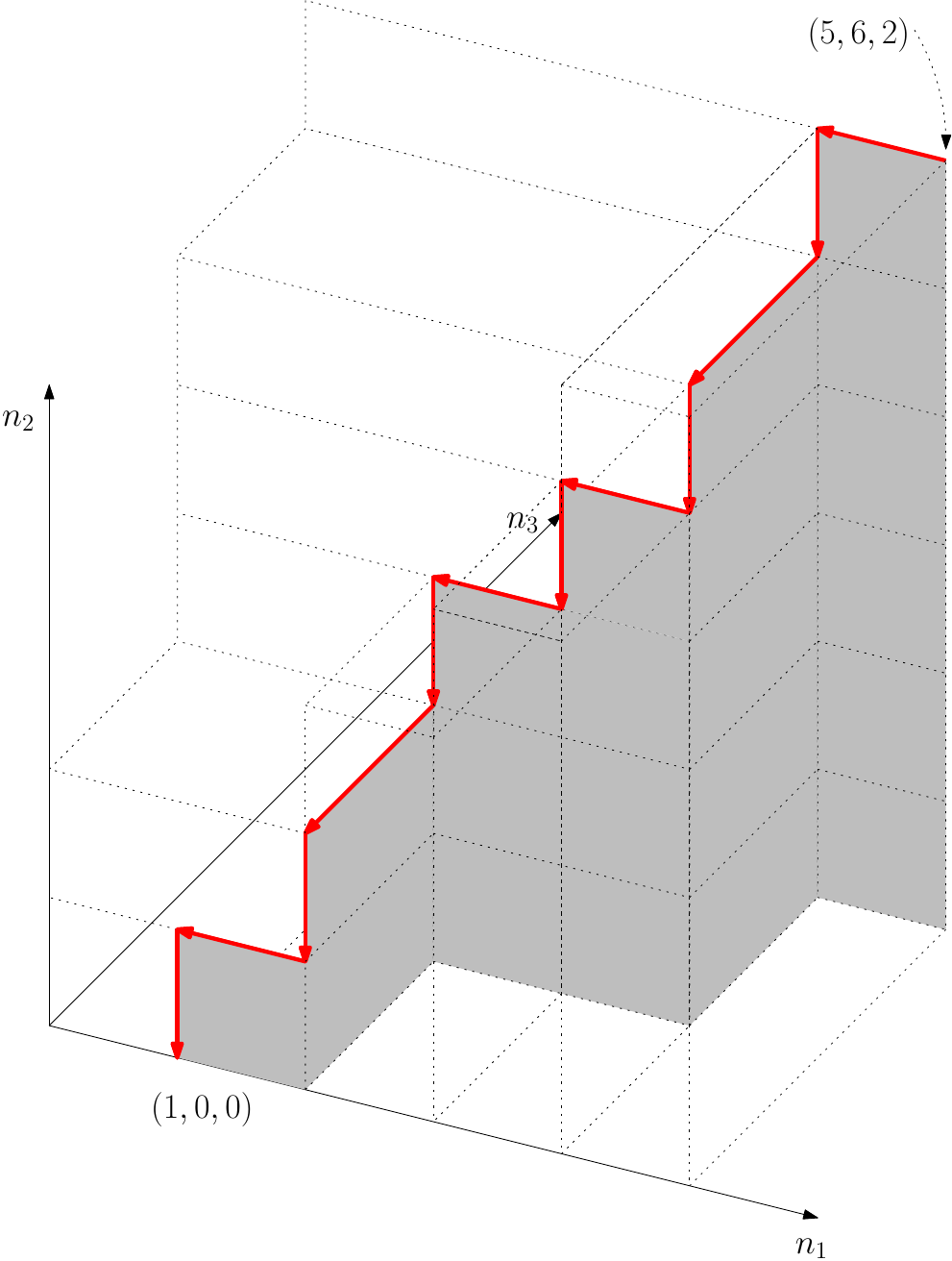}
  \end{center}
\caption{\it A path of a replicator coalescent block numbers with $k = 3$, initiated from $\bm n(0) = (5,6,2)$ and reducing to a population of one with $\bm n(\gamma_1) = (1,0,0)$. The diagram represents the range of the process and there is no time axis.}
  \label{fig1}
  \end{figure}

The reader will note that the rate at which inter-type mergers occur is  that of Kingman's coalescent.  When there is only one type, and hence only inter-type coalescence occurs, 
the replicator coalescent is therefore nothing more than the death chain of a Kingman coalescent. 
In this sense, the process $(\bm n, \mathbb{P})$ may be thought of as a multi-type variant of the Kingman death chain. 

We also note that the requirement $C_{i,j}>0$, for all $i,j$, is a sufficient  condition to ensure that 
populations of different types are able to interact with one another, which, in turn, will allow for the population to collapse to a single surviving individual. 
If, for example we were to take $C_{i,j}=0$ for all $i\neq j$, then we have $k$ independent Kingman coalescence processes that do not interact, in which case, the absorbing state of $(\bm{n}(t),t\geq 0)$ is the vector $(1,\cdots, 1)$. 
That said, even sticking to the requirement that the model ensures that the population reduces to a single surviving individual, the condition $C_{i,j}>0$, for all $i,j$,  may well be replaced by  a weaker `irreducibility'-type condition. Nonetheless, we  refrain from exploring this further at this point as, later on in this article, we will require more conditions on the matrix $C_{i,j}$, for different reasons, which will supersede the current discussion.

\medskip
The specific structure of the replicator coalescent does not permit an interpretation in terms of exchangeable partition structures as is the case for Kingman's coalescent, neither when considering the total population, nor when considered as a vector valued process. In the former case, this is obviously because blocks are subject to different rates according to their type and therefore cannot be exchangeably labelled. In the latter case, a notion of multi-type exchangeability is possible and was discussed in the context of coalescence in \cite{JKR}. Unfortunately, the way in which mergers  occur across different types of blocks is just outside the definition given in \cite{JKR}, which insists on a random selection of multiple mergers, which cannot be arranged to be a single merger via parameter choices.

\medskip

Although the replicator coalescent lives in the space $\mathbb{N}^k_*$, we prefer to describe it via a so-called
{\it $L^1$-polar decomposition} in the spirit of e.g. \cite{BEG}. To this end,  define 
\[
\sigma(t) = ||\bm n(t)||_1 = n_1(t)+\cdots+n_k(t)\in \mathbb{N}
\]
 and let 
 \[
 \bm  r(t) = \arg(\bm n(t)): = \sigma(t)^{-1}\bm{n}(t)\in\mathcal{S}^k,
 \]
  where 
  \[
  \mathcal{S}^{k}:=\left\{ (x_1,x_2,...,x_{k})\in\mathbb{R}^k:\sum_{i=1}^{k}x_i=1 \mbox{, } x_i\geq0 \mbox{ }\forall i\right\}
  \]
   is the $(k-1)$-dimensional simplex, with vector entries $\bm{r}_i(t) = \sigma(t)^{-1}n_i(t)$, $i = 1,\cdots, k$. We will additionally write and occasionally use $ \mathcal{S}^{k}_+$ to have the same definition as $ \mathcal{S}^{k}$, albeit each of the $x_i>0$.

\medskip

We often refer to the process $\bm n$ as $( \bm r, \sigma^{-1})$. In particular, if $\bm\eta =(\eta_1,\cdots,\eta_k)\in \mathbb{N}^k_*$,  then, we will use 
$\mathbb{P}_{\bm\eta}$
 for the law of the replicator coalescent issued from state $\bm n(0) = \bm \eta$. The usual convention would be to think of  the family of probabilities  $\mathbb{P}= (\mathbb{P}_{\bm\eta}, \bm\eta\in \mathbb{N}^k_*)$, however we interchangeably also think of $\mathbb{P}= (\mathbb{P}_{\bm\eta}, \bm\eta\in \mathcal{S}^k\times \mathbb{N}^{-1})$, where $\mathcal{S}^k\times \mathbb{N}^{-1}: = \{(\bm x, 1/n): \bm x\in \mathcal{S}^k\text{ and }n\in \mathbb{N}\}$.
 

\medskip

In the setting of the block-counting process for Kingman's coalescent, there are three fundamental facts which are now taken for granted in mainstream literature. Firstly Kingman's coalescent block-counting process  comes down from infinity almost surely. Secondly, it comes down from infinity in such a way that the number of blocks divided by $1/t$ converges to a constant as $t\to0$. Thirdly, and somewhat trivially, the block counting process is a death chain with  an absorbing state which is a single block.
This inspires us to address  the following three main questions of our replicator coalescent:

\begin{itemize}
\item[(1)] Does it `come down from infinity' in an appropriately prescribed sense?
\item[(2)] What is the distribution on $\{1,\cdots, k\}$ of the type of the terminal block?

\end{itemize}

We are interested in characterising the behaviour of the replicator coalescent as we start it from an initial population that `tends to infinity' in a prescribed way, and as such we will give a response to (1).
In doing so, we will unravel a remarkable connection with the theory of evolutionary dynamical systems, described by so called {\it replicator equations}, hence our choice of the name {\it replicator coalescent}. Our response to (1) is by no means a complete story. For example we  don't show the existence of entrance laws on Skorokhod space, but rather we focus on the behaviour of the process as we limit its initial state to a boundary state `at infinity', which means an initial condition for $( \bm r, \sigma^{-1})$ in $\mathcal{S}^k_+\times \{0\}$.

\begin{definition}
Henceforth we will say that `$(\bm \eta^N, N\geq 1)$  tends to $( \bm r_0, 0)\in \mathcal{S}^k_+\times \{0\}$', if $\bm \eta^N\in  \mathbb{N}^k_*$ such that $\norm{\bm\eta^N} = N$ and $\arg(\bm\eta^N)\to \bm r_0$ as $N\to\infty$.
\end{definition}

We are unable to provide any results for (2) and believe this to be an extremely difficult problem; even in light of related results e.g. on the aforementioned OK Coral model in \cite{King, Vking, Vking2}. This short article is but an introduction to replicator coalescence, offering the opportunity for further analysis to take place. Indeed, in future work, we aim to give a more precise statement on the convergence on the Skorokhod space of the process to a unique entrance law which exhibits continuity at time zero. We comment further on this in the final section of this paper.
\section{Main results}

For our first result, we show that the replicator coalescent comes down from infinity in a relatively specific sense. We study the time $\gamma_m = \inf\{ t> 0 : \sigma(t) \leq m\}$, $m\in\mathbb{N}$, that the coalescent first reaches a state with $m$ blocks in total, which can be bounded in probability for large $N$ according to the following result.

\begin{lemma}\label{comingdownfrominfinity}
Without any further requirement on $C_{i,j}$, there exists a Kingman coalescent death chain $\nu^-$ such that $\sigma\geq \nu^-$ under $\mathbb{P}_{\bm\eta}$, for each ${\bm \eta} = (\arg({\bm n}(0)), \sigma(0)^{-1})\in \mathcal{S}^k\times \mathbb{N}^{-1}$. 
Conversely, if $C_{i,j}>0$ for all $i,j$, there exists a Kingman coalescent death chain $\nu^+$ such that $\sigma\leq \nu^+$ under $\mathbb{P}_{\bm\eta}$, for each ${\bm \eta} = (\arg({\bm n}(0)), \sigma(0)^{-1})\in \mathcal{S}^k\times \mathbb{N}^{-1}$. 

In particular, under the  assumption that $C_{i,j}>0$ for all $i,j$,
the replicator coalescent comes down from infinity  in the sense that, for all $\varepsilon>0$, 
\[
\lim_{m\to\infty}\lim_{N\to\infty}\mathbb{P}_{\bm\eta^N}(\gamma_m<\varepsilon) = 1.
\]
\end{lemma}

As it comes down from infinity, the standard Kingman coalescent with merger rate $c>0$ has block count $(\nu(t), t\geq0)$ that is approximately described by the ordinary differential equation
\begin{equation}
\dot \nu(t)=-c\nu(t)^2/2.
\label{Kode}
\end{equation}
  It turns out that the corresponding ODE for our coalescent is already known in the evolutionary game theory literature as the {\it replicator equation}.
\medskip

Replicator equations are used to describe a population of $k$ types, with the proportion of the total population of type $i\in\{1,\cdots, k\}$ at time $t\geq0$ denoted $x_i(t)\in[0,1]$. These values sum to one, so $\bm{x}(t): = (x_1(t),\cdots, x_k(t))$ lives in $\mathcal{S}^k$.  The replicator equations are then written as
  \[ 
  \dot x_i(t)=x_i(t)(f_i(\bm{x}(t))-\overline{f}(\bm{x}(t))), \qquad i = 1,\cdots, k,\,  t\geq0,
  \]
where $f_i:\mathcal{S}^k\mapsto \mathbb{R}$ describes the ``fitness'' of type $i$ as a function of the current population density and  $\overline{f}(\bm{x}(t))=\sum_{i=1}^nx_if_i(\bm{x}(t))$ is the average population fitness. 

\medskip

Fitness is often assumed to depend linearly upon the population distribution, with coefficients organised in the ``payoff matrix'' $A$. Specifically, let $A_{i,j}$ denote the payoff for a player of type $i$ facing an opponent of type $j$. Then
\[f_i(\bm{x})=\sum_{j=1}^n A_{i,j}x_j.
\] 
This  replicator equation, henceforth referred to as the {\it $\bm A$-replicator equation} may be written
\begin{equation}\label{originalrepeq}
 \dot x_i(t)=x_i(t)\big([{\bm A} {\bm x}(t)]_i-{\bm x}(t)^T{\bm A} {\bm x}(t)\big), \qquad i = 1,\cdots,k, \, t\geq0.
\end{equation}

If the system \eqref{originalrepeq} admits a fixed point in the simplex, i.e. $x_i(t) = x^*_i$, $i = 1, \cdots, k$, for some vector ${\bm x}^* = (x^*_1, \cdots, x^*_k)\in\mathcal{S}^k$, so that $\dd(\sum_{i=1}^k x_i(t))/\dd t=0$, then we see  that, necessarily,
 \begin{equation}
[\bm{A}\bm{x}^*]_i=(\bm{x}^*)^T\bm{A}\bm{x}^* \quad i =1, \cdots, k. 
\label{fixedpoint}
\end{equation}
In turn, this  implies that there is a constant, $c>0$, such that ${\bm A}{\bm x}^*=c\bm{1}$, so, ${\bm x}^*=c\bm{A}^{-1}\bm{1}$, where $\bm 1$ is the vector in $\mathbb{R}^k$ with unit entries. Since ${\bm x}^*\in\mathcal{S}^k$, it follows that $\bm{1}^T{\bm x}^*=1$, and hence   $c=(\bm{1}^TA^{-1}\bm{1})^{-1}$, thus \eqref{originalrepeq} is solved by 
\begin{equation}
{\bm x}^*=\dfrac{A^{-1}\bm{1}}{\bm{1}^TA^{-1}\bm{1}}.
\end{equation}

\medskip

If ${\bm x}^*$ satisfies the relation 
\[
({\bm x}^*)^T{\bm A}{\bm x} >{\bm x}^T{\bm A}{\bm x}, 
\]
for all $ {\bm x}\neq \bm x^*$ in a neighbourhood of $\bm x^*$,
then it is called an {\it evolutionary stable state} (ESS). Theorem 7.2.4 of \cite{HS} states that if ${\bm x}^*$ is a ESS, then 
\begin{equation}
\lim_{t\to\infty}{\bm x}(t) = {\bm x}^*.
\label{ESScgce}
\end{equation}

\medskip

The following results give us a remarkable connection between  the theory of replicator equations and  our coalescent model. To this end we define our $A$ matrix by
\begin{equation}
A_{i,j} = -\left(C_{j,i} \mathbf{1}_{j\neq i} + \frac{1}{2} C_{i,i}\mathbf{1}_{i=j}\right).
\label{A}
\end{equation}
For the remainder of the paper we will assume the rates $C$ are such that \eqref{ESScgce} holds. 

%
\medskip

Let us now state the connection between the notion of coming down from infinity for the replicator coalescent and the corresponding replicator equations.

\begin{theorem}\label{prereplicator} 
Suppose that $\bm A$ is such that \eqref{ESScgce} holds and that $(\bm \eta^N, N\geq 1)$ tends to $( \bm r_0, 0)$. Then for all $T>0$,
\[
\lim_{N\to\infty}\mathbb{E}_{{\bm \eta}^N}\left[\sup_{t\leq T}\norm{\bm R(t)- \bm x(t)}_1\right]=0,\qquad i =1,\cdots, k,
 \]
 where $\bm R(t) = \bm r(\tau(t))$, $t\geq0$, $\bm x(t) = (x_1(t), \cdots, x_k(t))$ solves the $\bm A$-replicator equation with initial condition $\bm x(0) = \bm r_0$ and 
 \[
 \tau(t)
= \inf\{s>0: \int_0^s \sigma(u)\dd u>t\}, \qquad t\geq 0.
 \]
 In particular, 
 \[
 \lim_{t\uparrow\infty}\lim_{N\to\infty}\mathbb{E}_{{\bm \eta}^N}[\norm{ \bm R(t)- \bm x^*}_1]=0,\qquad i =1,\cdots, k.
 \]
\end{theorem}
\begin{figure}[h!]
\vspace{-2cm}
  \begin{center}
\includegraphics[width=0.9\textwidth]{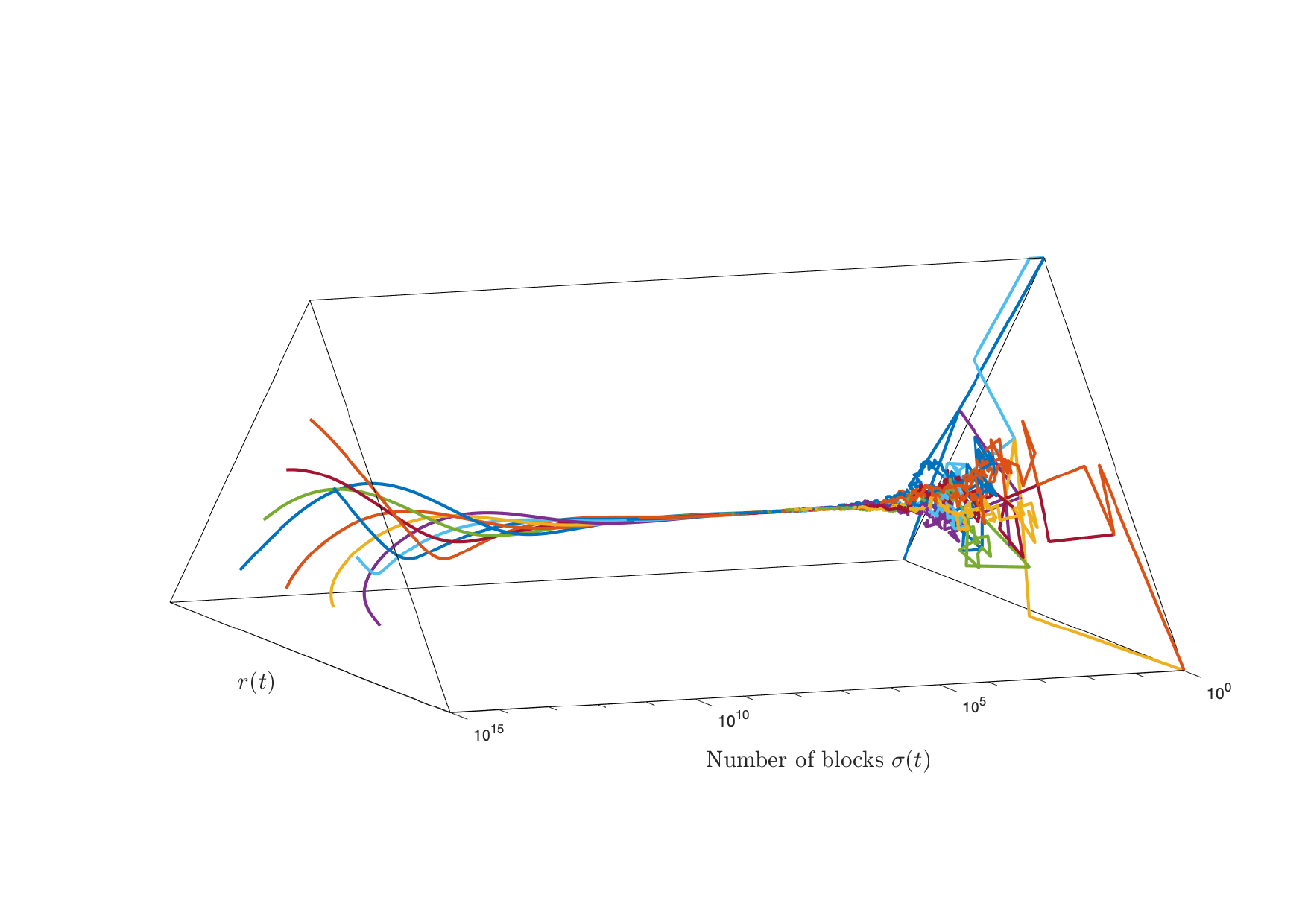}
  \end{center}
  \vspace{-1.5cm}
\caption{\it Simulations of a  replicator coalescent with $k = 3$ initiated from a variety of initial states with an initial number of blocks $\sigma (0)=10^{15}$. Each path represents a simulation from a different initial state, presented in barycentric coordinates in the 3-simplex and a logarithmic axis for the total number of blocks. The matrix $C$ has entries $C_{i,i}=C_{i,i+1}=1$ and other entries zero. The reader will note this case in particular demonstrates that we clearly do not need to enforce $C_{i,j}>0$ for all $i,j$.}
  \label{fig}
\end{figure}
In words, Theorem \ref{prereplicator} says that, by dilating time arbitrarily close to zero,  its process $\bm r$ in the simplex behaves deterministically like the solution to an $\bm A$-replicator equation. For a special choice of the matrix $C_{i,j}$, Figure \ref{fig} shows simulations of the process $(\bm R(t), t\geq0)$ which resonate with the statement of Theorem \ref{prereplicator}. As part of the proof of Lemma \ref{comingdownfrominfinity}, we will see that under $\mathbb{P}_{\bm \eta^N}$, the process $\sigma$ is comparable to a Kingman coalescent with some collision rate, say $c>0$. Noting that 
\[
\int_0^{\tau(t)}\sigma(u)\dd u = t, \text{ which implies  } \dot \tau(t) = 1/\sigma(\tau(t)),
\] 
   if, in heuristic terms, we treat $\sigma$ as a solution to \eqref{Kode} with $\sigma(0) = N$, so that $\sigma(t)^{-1} = N^{-1}+ ct/2$, we have that 
  $\tau(t)\approx 2(\exp\{ct/2\}-1)/cN\approx t/N$.

For a Kingman coalescent, say $(\nu(t), t\geq0)$ with merger rate $c$, classical reasoning tells us that $(\nu(t/N)/N, t\geq0)$  converges in an appropriate sense to the solution to   the ODE \eqref{Kode} as $\nu(0) = N\to\infty$. 
 In the spirit of these arguments, we would therefore expect that we can similarly control $(\sigma(t/N)/N, t\geq0)$  as well as $(\bm{n}(t/N)/N, t\geq0)$ as $N\to\infty$. In turn,  since we can write 
   \[
   \bm{R}(t)\approx \bm{r}(t/N) = \frac{\bm{n}(t/N)/N}{\sigma(t/N)/N},\qquad t\geq0,
   \] 
   we can therefore think of Theorem \ref{prereplicator} as a version of the aforementioned functional scaling result for Kingman's coalescent.

\medskip

The remainder of this paper is structured as follows. In the next section, we discuss how we can compare the process $(\sigma(t), t\geq0)$ with Kingman's coalescent on the same probability space, when it is issued from a finite number of blocks. This comparison  is used frequently in several of our proofs. 
In Section \ref{smgsect} we treat the Markov process $\bm n$ as a semimartingale and study its decomposition as the sum of a martingale and a bounded variation compensator. This provides the basis for the proofs of Theorem \ref{prereplicator}, which are given in Section \ref{prepostproofs}. Finally in Section \ref{remarks} we conclude with some technical remarks and two conjectures concerning further behaviour of the entrance law.

\section{Stochastic comparison with Kingman's coalescent}\label{proof1}
As alluded to above, there are various points in our reasoning where we will compare the number of blocks in a replicator coalescent with the number of blocks in an appropriately formulated Kingman coalescent on the same probability space. 
The first such result is reasoning  gives us the proof that the replicator coalescent comes down from infinity.

\begin{proof}[Proof of Lemma \ref{comingdownfrominfinity}]
The process $\sigma$ decreases by one with each block merger, analogously to the block counting process of a standard Kingman coalescent. It is therefore sufficient to prove that $\sigma$ decreases at least as slow, or at least as fast, as a Kingman block counting process with comparable rates.

\medskip

 Formally speaking, from finite starting states, we want to construct such a Kingman coalescent $\nu^+$ on the same space as $\sigma$ with that $\sigma\leq \nu^{+}$ by considering the minimal  rate of $\sigma$. 

\medskip

The total rate of mergers in state $\bm{n}$ is given by 
\begin{align}\label{rates}
\rho(\bm{n})=\sum_{i=1}^k\left(\sum_{j\neq i}C_{i,j}n_{j}n_i +C_{i,i} {n_i \choose 2}
\right)\,,
\end{align}
which depends not just on the total number of blocks, but also on the distribution of block types. However, since $C_{i,j}\geq0$  for all $i,j$, 
we can choose $C> \max_{i,j}C_{i,j}$ such that
\begin{equation}
\rho(\bm{n})
<
\sum_{i=1}^k\frac{1}{2}Cn_i\left(\sum_{j\neq i}n_{j} +  n_i-1\right) = 
\sum_{i=1}^k\frac{1}{2}Cn_i(\norm{\bm n}_1-1) = {C} {\norm{\bm n}_1 \choose 2}.
%
%
\label{rhoupper}
\end{equation}
Appealing to the skip free property, it follows that we can stochastically couple a Kingman coalescent death chain $(\nu^{-}(t), t\geq0)$ with collision rate $C$ and the process $({\bm n}(t), t\geq 0)$ on the same space such that, with  ${\bm \eta} = (\arg({\bm n}(0)), \sigma(0)^{-1})\in \mathcal{S}^k\times \mathbb{N}^{-1}$ and $\nu^-(0) = \sigma(0)$, $\sigma(t)\geq \nu^-(t)$, $t\geq0$.

\medskip


Conversely, when $C_{i,j}>0$ for all $i,j$, 
writing $\underline{C} = \min_{i,j}C_{i,j}$, we get 
\[
\rho(\bm{n})
\geq
\sum_{i=1}^k\frac{1}{2}\underline{C}n_i\left(\sum_{j\neq i}n_{j} +  n_i-1\right)
= \underline{C}{\norm{\bm n}_1\choose 2}.
\]
In the same spirit,  follows that we can stochastically embed another Kingman death chain $(\nu^+(t), t\geq0)$ and the process $({\bm n}(t), t\geq 0)$ on the same space such that, with  ${\bm \eta} = (\arg({\bm n}(0)), \sigma(0)^{-1})\in \mathcal{S}^k\times \mathbb{N}^{-1}$ and $\nu^-(0) = \sigma(0)$, $\sigma(t)\leq \nu^+(t)$, $t\geq0$.

In particular, as $\nu^+$ comes down from infinity, since $
\gamma_m \leq \beta^+_{m}:=\inf\{t>0:\nu^+(t)=m\}$, it follows that, for each $\varepsilon>0$,
\[
\lim_{m\to\infty}\lim_{N\to\infty}\mathbb{P}_{{\bm \eta}^N}[\gamma_m<\varepsilon ]\geq \lim_{m\to\infty}\lim_{N\to\infty}\mathbb{P}[\beta^+_m<\varepsilon| \nu^+(0) =N]=1, 
\]
thus completing the proof.
\end{proof}

\section{Semimartingale representation}\label{smgsect}
We would like to treat the replicator coalescent $(\bm n(t), t\geq0)$ as a semimartingale. It turns out to be more convenient to consider instead the vectorial process 
\[
\bm y(t)=\begin{bmatrix}\bm r(t\wedge \gamma_1)\\\\ {1}/{\sigma(t\wedge \gamma_1)}\end{bmatrix},\qquad t\geq 0,
\]
where 
\[
\gamma_1 = \inf\{t>0: \sigma(t) = 1\}.
\]
Naturally, by expressing the evolution of $(\bm y(t), t\geq0)$ as that of a semimartingale our interest is predominantly  in the process $(\bm r(t), t\geq0)$ so as  to make a link with the replicator equations in \eqref{originalrepeq}.

\medskip

\begin{lemma}\label{2} For each $\bm\eta\in\mathbb{N}^k_*$, 
the process $\bm y$ under $\mathbb{P}_{\bm\eta}$ has a semimartingale decomposition 
\begin{equation}
\bm y(t) = \bm y(0)+ \bm m(t) + \bm \alpha(t), \qquad t\geq0
\label{ysemimg}
\end{equation}
 where $(\bm m(t), t\geq0)$  is a martingale taking the form
\begin{equation}
\bm m(t) = \sum_{s\leq t\wedge \gamma_1}\Delta \bm y(s) - \bm\alpha(t), \qquad t\geq0,
\label{mgdef}
\end{equation} 
such that 
$
\Delta\bm y(t) =\bm y(t)- \bm y(t-) 
$
and $(\bm \alpha(t), t\geq0)$ is a compensator taking  the form 
\begin{equation}
\bm\alpha(t) =\int_{0}^{t\wedge \gamma_1 } \frac{\sigma(s)}{\sigma(s)-1}\sum_{i = 1}^k \begin{bmatrix} \sigma(s)({\bm r} (s)-\bm{e_i}) \\ \\ 1\end{bmatrix}r_i(s) [\sigma(s)^{-1} {\rm diag}(\bm A)\bm{1}-\bm A\bm r(s) ]_i \,\dd s.
\label{da}
\end{equation}
\end{lemma}
\begin{proof}
A standard computation using the compensation formula tells us that 
$\bm m$ is a martingale provided that $\sum_{s\leq t}\norm{\Delta \bm m(s)}_1=\sum_{s\leq t}\norm{\Delta \bm y(s)}_1$ has finite expectation for each $t\geq 0$, which is equivalent to the existence of the compensator $\bm \alpha(t)$. The latter is given by the rates that define the replicator coalescent. More precisely, recalling \eqref{A}, 
\begin{align}
\bm\alpha(t)
&=\int_{0}^{t\wedge \gamma_1}\sum_{i=1}^k \begin{bmatrix}\dfrac{\bm n(s)-\bm{e_i}}{\sigma(s)-1}-\dfrac{\bm n(s)}{\sigma(s)}\\ \\ \dfrac{1}{\sigma(s)-1}-\dfrac{1}{\sigma(s)}\end{bmatrix}\Bigg[ \sum_{\substack{j=1 \\ j\neq i}}^kn_j(s)n_i(s)C_{ji}+\frac{1}{2}n_i(s)(n_i(s)-1)C_{i,i}\Bigg] \dd s\notag\\
&=\int_{0}^{t\wedge \gamma_1}\sum_{i=1}^k\begin{bmatrix}\dfrac{\bm n(s)-\bm{e_i}\sigma(s)}{(\sigma(s)-1)\sigma(s)}\\\\ \dfrac{1}{(\sigma(s)-1)\sigma(s)}\end{bmatrix}\Bigg[ n_i(s)A_{i,i}-\sum_{j=1 }^kn_j(s)n_i(s)A_{i,j} \Bigg] \dd s\notag\\
&=\int_{0}^{t\wedge \gamma_1} \dfrac{\sigma(s)}{\sigma(s)-1}\sum_{i = 1}^k \begin{bmatrix} \sigma(s)(\bm r (s)-\bm{e_i}) \\ \\1\end{bmatrix}r_i(s) [\sigma(s)^{-1} {\rm diag}(\bm A)\bm{1}-\bm A\bm r(s) ]_i \,\dd s,
\label{deriveda}
\end{align}
as required. Note, if we identify the compensator via the density $(\bm\lambda(t), t\geq0)$, where  
\begin{equation}
\bm\alpha(t) = :\int_0^{t\wedge \gamma_1} \bm\lambda(s)\dd s
\label{lambda}
\end{equation}
 then, in the  above representation of $\bm\alpha$, the largest term is of order $\sigma(s)$, from which, because the process $\bm n$ is non-increasing, we can  easily conclude that,  for all $\bm\eta\in\mathbb{N}^k_*$ and any  time $t\geq 0$, 
\begin{equation}
\mathbb{E}_{\bm \eta^N}\left[ \sum_{s\leq t \wedge \gamma_1}\norm{\Delta\bm y(s)}_1\right]\leq 
 \mathbb{E}_{\bm \eta}\left[\int_0^{t\wedge \gamma_1} \norm{\bm\lambda (s)}_1 \dd s\right]\leq C\norm{\bm\eta}_1\mathbb{E}_{\bm \eta^N}[t\wedge \gamma_1]\leq C\norm{\bm\eta}_2t,
\label{mgcontrol}
\end{equation}
for an unimportant constant $C>0$. This
 ensures that  $\bm m$ is a martingale and that $\bm\alpha$ is well defined. 
\end{proof}

For the proof of Theorem \ref{prereplicator}, we are interested in the  behaviour of the process $\bm r$  under $\mathbb{P}_{\bm\eta}$ for any $\bm\eta$ such that  $\norm{\bm\eta}_1\to\infty$ and $\arg(\bm\eta)\to\bm r$ for some $\bm r\in\mathcal{S}^k_+$. Heuristically speaking, the term $\sigma(s)({\bm r} (s)-\bm{e_i})$ in the expression for $\bm\alpha$ suggests that  $\alpha(t)$ explodes  as $t\to 0$. The undesirable factor $\sigma(s)$ can be removed however by an appropriate time change and in doing so, we begin to see where the relationship with the replicator equations emerges.

\begin{lemma}
\label{3} Suppose we define the sequence of stopping times  $(\tau(t), 
t\geq 0)$, which are defined by the right inverse, 
\begin{equation}
\tau(t)
= \inf\{u>0: \int_0^u \sigma(s)\dd s>t\}, \qquad t\geq 0.
\label{taudef}
\end{equation}
Then 
$\bm y^\tau: = \bm y\circ\tau$ has semimartingale  decomposition 
$\bm y^\tau = \bm  m^\tau + \bm\alpha^\tau $, where $ \bm m^\tau:= \bm m \circ \tau$ is a martingale and, for 
$t\geq0$,
\begin{align}
\bm\alpha^\tau(t)  
&=\int_{0}^{t \wedge \tau^{-1}(\gamma_1)} \dfrac{\sigma(\tau(s))}{\sigma(\tau(s))-1}\notag\\
&\hspace{2cm}\sum_{i = 1}^k \begin{bmatrix} \bm r (\tau(s))-\bm{e_i} \\ \\\dfrac{1}{\sigma(\tau(s))}\end{bmatrix} r_i(\tau(s)) [ \sigma(\tau(s))^{-1} {\rm diag}(\bm A)\bm{1}- \bm A\bm r(\tau(s)) ]_i \,\dd s
\label{alphatau}
\end{align}
\end{lemma}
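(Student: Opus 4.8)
The plan is to derive the decomposition of $\bm y^\tau$ by composing the semimartingale decomposition \eqref{ysemimg} of $\bm y$ obtained in Lemma~\ref{2} with the time change $\tau$, and then to identify $\bm\alpha^\tau=\bm\alpha\circ\tau$ explicitly by a deterministic change of variables in \eqref{da}. First I would record the basic properties of $\tau$. Writing $A(t)=\int_0^t\sigma(s)\,\dd s$, the functional $A$ is, since $\sigma$ is a non-increasing piecewise constant process issued from a finite value and bounded below by $1$, continuous, strictly increasing and absolutely continuous with $A'(t)=\sigma(t)$ at all but finitely many $t$. Hence $\tau=A^{-1}$ is continuous and strictly increasing, with $\tau'(s)=1/\sigma(\tau(s))$ off a Lebesgue-null set and $\tau^{-1}(\gamma_1)=A(\gamma_1)$; moreover each $\tau(t)$ is an $(\mathcal F_s)$-stopping time since $\{\tau(t)\leq u\}=\{A(u)\geq t\}\in\mathcal F_u$, so $(\tau(t),t\geq0)$ is a continuous time change and we may set $\mathcal G_t=\mathcal F_{\tau(t)}$.

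Next I would check that the time change preserves the martingale property, i.e.\ that $\bm m^\tau=\bm m\circ\tau$ is a $(\mathcal G_t)$-martingale. By Lemma~\ref{2}, $\bm m$ is an $(\mathcal F_t)$-martingale, and since $\bm y$ is stopped at $\gamma_1$ we have $\bm m(t)=\bm m(t\wedge\gamma_1)$; because from the finite configuration $\bm\eta$ only finitely many states are accessible and all rates out of non-absorbing states are positive, $\gamma_1<\infty$ with $\mathbb E_{\bm\eta}[\gamma_1]<\infty$. As $\bm r$ takes values in $\mathcal S^k$ and $1/\sigma\leq1$, the process $\bm y$ is bounded, so $\norm{\bm m(t)}_1\leq c_k+V_{\gamma_1}$ with $c_k$ a constant and $V_{\gamma_1}=\int_0^{\gamma_1}\norm{\bm\lambda(s)}_1\,\dd s$ the total variation of $\bm\alpha$; by \eqref{mgcontrol}, on letting $t\uparrow\infty$, this dominating variable is integrable, so $\bm m$ is uniformly integrable and closed by $\bm m(\gamma_1)$. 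Optional sampling at the stopping times $\tau(s)\leq\tau(t)$ then yields $\mathbb E[\bm m(\tau(t))\mid\mathcal F_{\tau(s)}]=\bm m(\tau(s))$, which is exactly the $(\mathcal G_t)$-martingale property of $\bm m^\tau$; the process $\bm\alpha^\tau=\bm\alpha\circ\tau$ is $(\mathcal G_t)$-adapted of finite variation, so composing \eqref{ysemimg} with $\tau$ gives $\bm y^\tau=\bm y(0)+\bm m^\tau+\bm\alpha^\tau$ with $\bm\alpha^\tau$ the compensator.

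Finally, the explicit formula \eqref{alphatau} follows from a pathwise change of variables. Writing $\bm\alpha(t)=\int_0^{t\wedge\gamma_1}\bm\lambda(u)\,\dd u$ with $\bm\lambda$ read off from \eqref{da}, substitute $u=\tau(s)$, $\dd u=\dd s/\sigma(\tau(s))$: the upper limit $\tau(t)\wedge\gamma_1$ becomes $t\wedge A(\gamma_1)=t\wedge\tau^{-1}(\gamma_1)$, and the extra factor $1/\sigma(\tau(s))$ cancels one power of $\sigma(\tau(s))$ in $\bm\lambda(\tau(s))$, converting the vector $(\sigma(\bm r-\bm e_i),1)^T$ into $\sigma(\bm r-\bm e_i,1/\sigma)^T$ and producing precisely \eqref{alphatau}.

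I expect the only genuine obstacle to be the second step, namely justifying optional sampling at the (possibly unbounded) stopping times $\tau(t)$; everything there hinges on the uniform integrability of $\bm m$, for which the boundedness of $\bm y$ together with the $L^1$-bound \eqref{mgcontrol} and $\mathbb E_{\bm\eta}[\gamma_1]<\infty$ is the key input. The properties of $\tau$ and the change-of-variables computation are routine.
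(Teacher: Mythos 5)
Your proposal is correct and follows essentially the same route as the paper: compose the decomposition of Lemma \ref{2} with the time change and use the change-of-variables identity $\sigma(\tau(t))\,\dd\tau(t)=\dd t$ to turn \eqref{da} into \eqref{alphatau}. The only difference is cosmetic: where the paper upgrades $\bm m^\tau$ from local martingale to martingale by an integrability estimate in the spirit of \eqref{mgcontrol} applied to \eqref{alphatau}, you instead argue uniform integrability of $\bm m$ (domination by $\int_0^{\gamma_1}\norm{\bm\lambda(s)}_1\dd s$ together with $\mathbb{E}_{\bm\eta}[\gamma_1]<\infty$) and invoke optional sampling at the stopping times $\tau(t)$, which is an equally valid and somewhat more explicit justification.
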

\begin{proof}
We use basic Stieltjes calculus  to tell us that 
\begin{equation*}
\dd \bm\alpha^\tau(t) = \dd \bm\alpha(s)|_{s = \tau(t)}\dd \tau(t).
\end{equation*}
Moreover, 
\begin{equation}
\int_0^{\tau(t)} \sigma(s)\dd s = t \,\,\,\text{ and hence } \,\,\, \sigma(\tau(t))\dd \tau(t) = \dd t 
\label{dt}.
\end{equation}
Combining these observations with the conclusion of  Lemma \ref{2} the result follows. 
In particular, from \eqref{lambda},
\begin{equation}
\int_0^{\tau(t)\wedge \gamma_1} \bm\lambda(s)\dd s = \int_0^{t \wedge\tau^{-1}(\gamma_1)} \frac{\bm\lambda(\tau (u))}{\sigma(\tau(u))}\dd u.
\label{dt2}
\end{equation}
Technically, we need to verify that $\bm m^\tau$ is a martingale rather than a local martingale, however a computation similar to \eqref{mgcontrol} taking advantage of \eqref{alphatau} is easily executed affirming the required martingale status. 
\end{proof}

Next we look at how to control the second moment of the martingale $\bm m^\tau$ in the semimartingale decomposition of $\bm y^\tau$.

\begin{lemma}\label{2upper}
For each $\bm\eta \in\mathbb{N}^k_*$, the martingale $\bm m^\tau$ under $\mathbb{P}_{\bm \eta}$ satisfies
\[
\mathbb{E}_{\bm \eta^N}\left[\norm{\bm m^\tau(t)}^2_2\right] \leq C\mathbb{E}_{\bm\eta}\left[ \int_0^{\tau(t  ) \wedge\gamma_1}\frac{\sigma(s)^2}{(\sigma(s)-1)^2} \dd s\right] , \qquad t\geq0.
\]
\end{lemma}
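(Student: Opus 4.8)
The plan is to exploit the fact that $\bm m^\tau$ is a purely discontinuous martingale: by Lemma \ref{3}, $\bm m^\tau = \bm y^\tau - \bm\alpha^\tau$, where $\bm y^\tau$ is a time-changed pure-jump process (each jump of $\bm n$ produces one jump of $\bm y$) and $\bm\alpha^\tau$ is its continuous compensator. For such a martingale the $L^2$-norm is controlled by the predictable quadratic variation, which is in turn the compensator of $\sum_{s\le t}\norm{\Delta\bm m^\tau(s)}_2^2$. Since the compensator is continuous, $\Delta\bm m^\tau(s) = \Delta\bm y^\tau(s) = \Delta\bm y(\tau(s))$, so it suffices to compute the compensator of $\sum_{s\le t}\norm{\Delta\bm y(s)}_2^2$ along the original clock and then apply the time change \eqref{dt}.

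The key steps, in order, are as follows. First, I would write $\mathbb{E}_{\bm\eta}[\norm{\bm m^\tau(t)}_2^2] = \mathbb{E}_{\bm\eta}[\langle \bm m^\tau\rangle_t]$, using orthogonality of the coordinate martingales up to the cross terms — more safely, bound $\norm{\bm m^\tau(t)}_2^2 \le \sum_{\ell} (m^\tau_\ell(t))^2$ and use $\mathbb{E}[(m^\tau_\ell(t))^2] = \mathbb{E}[\langle m^\tau_\ell\rangle_t]$ coordinatewise. Second, identify $\langle \bm m^\tau\rangle_t$ as the compensator of $\sum_{s\le t}\norm{\Delta\bm y^\tau(s)}_2^2$; because the jumps of $\bm y^\tau$ are exactly the jumps of $\bm y$ evaluated at the random times $\tau(s)$, and the compensator of $\sum_{s\le t}\norm{\Delta\bm y(s)}_2^2$ under the original clock has an integrand built from the jump sizes in the derivation \eqref{deriveda} weighted by the total merger rate $\lambda(\bm n(s))$, the time change \eqref{dt} divides this integrand by $\sigma(\tau(s))$. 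Third, estimate the jump size: a merger in state $\bm n$ changes $\bm r$ by $O(1/\sigma)$ and changes $1/\sigma$ by $1/((\sigma-1)\sigma)$, so $\norm{\Delta\bm y(s)}_2^2 = O(1/\sigma(s)^2)$; multiplying by $\lambda(\bm n(s)) = O(\sigma(s)^2)$ from \eqref{rates} gives an integrand that is $O(1)$ per unit of the original clock, hence $O(\sigma(\tau(s))^{-1})$ after dividing by $\sigma(\tau(s))$ for the $\tau$-clock. Keeping the denominator exact, the merger rate carries a factor $\tfrac{\sigma}{\sigma-1}$ (from $\binom{\sigma}{2}$ versus $\sigma^2/2$ bookkeeping) and the $1/\sigma$-coordinate jump is $1/((\sigma-1)\sigma)$, which together produce the stated $\sigma(s)/(\sigma(s)-1)^2$. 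Finally, restrict the integration domain to $\tau(t)\wedge\gamma_1$ since $\bm y$ is frozen after $\gamma_1$, and absorb all bounded combinatorial constants (entries of $\bm A$, the factors $r_i \le 1$, and the sum over $i$ and $\ell$) into the single constant $C$.

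The main obstacle I anticipate is the bookkeeping in step two and three: one must carefully track the exact power of $\sigma$ surviving after the jump-size-squared is multiplied by the merger rate and divided by $\sigma(\tau(s))$, and in particular verify that the genuinely dangerous-looking term $\sigma(s)(\bm r(s)-\bm e_i)$ in \eqref{da} — which is $O(\sigma)$ and would blow up — does \emph{not} appear squared in the $\bm r$-coordinates of $\bm m^\tau$, because after the time change that term is replaced by $\bm r(\tau(s))-\bm e_i = O(1)$ as seen in \eqref{alphatau}, so the $\bm r$-coordinate jumps of $\bm y^\tau$ remain $O(1/\sigma)$. Concretely the dangerous cancellation is the one already isolated in Lemma \ref{3}: the time change is exactly what tames the $\sigma$ factor, and the estimate here is just the second-moment shadow of that fact. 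Everything else is a routine application of the compensation formula for pure-jump semimartingales together with the identity $\sigma(\tau(t))\,\dd\tau(t) = \dd t$ from \eqref{dt}; I would state the compensation-formula input as standard and not reprove it. The restriction to a finite horizon makes all expectations finite (using $\mathbb{E}_{\bm\eta}[t\wedge\gamma_1]\le t$ as in \eqref{mgcontrol}), so no localisation subtlety arises beyond the martingale-versus-local-martingale check already performed in Lemma \ref{3}.
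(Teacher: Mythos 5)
Your overall strategy is the same as the paper's: control $\mathbb{E}_{\bm\eta}[\norm{\bm m^\tau(t)}^2_2]$ by the expected compensator of the sum of squared jumps, read the jump sizes and rates off the derivation \eqref{deriveda}, and convert clocks via \eqref{dt}. The gap is at the decisive quantitative step. A type-$i$ merger in state $\bm n$ produces the jump $\Delta_i\bm y=\bigl((\bm r-\bm e_i)/(\sigma-1),\,1/(\sigma(\sigma-1))\bigr)$, so $\norm{\Delta_i\bm y}_2^2=\norm{\bm r-\bm e_i}_2^2/(\sigma-1)^2+1/(\sigma^2(\sigma-1)^2)$, and such jumps occur at rate at most a constant times $\sigma^2 r_i$. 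Hence the density, on the original clock, of the compensator of $\sum_{s\le\cdot}\norm{\Delta\bm m(s)}_2^2$ is of order
\[
\sum_{i=1}^k \sigma^2 r_i\,\frac{\norm{\bm r-\bm e_i}_2^2}{(\sigma-1)^2}
=\frac{\sigma^2\bigl(1-\norm{\bm r}_2^2\bigr)}{(\sigma-1)^2},
\]
which is $O(\sigma^2/(\sigma-1)^2)=O(1)$, exactly as your own step three concedes (``$O(1)$ per unit of the original clock''). The right-hand side of the lemma, however, has density $\sigma/(\sigma-1)^2=O(1/\sigma)$, smaller by a factor of $\sigma$. Your closing sentence of step three, that the $\sigma/(\sigma-1)$ factor from the $\binom{\sigma}{2}$ bookkeeping and the $1/(\sigma(\sigma-1))$ jump of the last coordinate ``together produce the stated $\sigma/(\sigma-1)^2$'', does not deliver this: the binomial correction is an $O(1)$ factor and the $1/\sigma$-coordinate is the subdominant contribution; the dominant term comes from the $\bm r$-coordinates and is of order $1$ per unit of original time whenever $1-\norm{\bm r}_2^2$ is bounded away from $0$ (e.g.\ near the centre of the simplex), and would only be $O(\sigma/(\sigma-1)^2)$ if $1-\norm{\bm r}_2^2=O(1/\sigma)$. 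As written, your route therefore proves only the weaker estimate
\[
\mathbb{E}_{\bm\eta}\left[\norm{\bm m^\tau(t)}_2^2\right]\le C\,\mathbb{E}_{\bm\eta}\left[\int_0^{\tau(t)\wedge\gamma_1}\frac{\sigma(s)^2}{(\sigma(s)-1)^2}\,\dd s\right]\le C'\,\mathbb{E}_{\bm\eta}\bigl[\tau(t)\wedge\gamma_1\bigr],
\]
not the inequality in the statement.

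The paper reaches the stated exponent through an explicit computation of the compensator $\bm\beta$ of the jump part of $\norm{\bm m}_2^2$ (the quantities $\gamma_i(t)$ in its proof) followed by a bound on $\bm\beta$; that computation is precisely the piece your sketch replaces with a one-line assertion, so you would need either to carry it out and exhibit a genuine cancellation removing a factor of $\sigma$, or to settle for the weaker bound above. It is worth noting that the weaker bound still suffices for the way Lemma \ref{2upper} is used later: in Corollary \ref{cor2} one has $\tau(t)\wedge\gamma_1\le t$ and $\tau(t)\to0$ weakly, and in the proof of Theorem \ref{postreplicator} one has $\mathbb{E}_{\bm\eta}[\gamma_m]\to0$ by the Kingman comparison, so the conclusions that the martingale term vanishes survive. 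But as a proof of the lemma as stated, the proposal has a genuine gap at exactly the step that carries the content.
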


\begin{proof}
Steiltjes calculus, or equivalently general semi-martingale calculus (see for example Theorem II.33 of \cite{Protter}), tells us that, since $\bm m$ has bounded variation,
\begin{align}
\norm{\bm m (t)}^2_2 &= 2\int_0^{t\wedge \gamma_1} \bm m(s-)\cdot\dd\bm m(s) +  \sum_{0<s\leq t \wedge \gamma_1}\left\{\Delta  
\norm{\bm m (s)}_2^2 - 2\bm m(s-)\cdot\Delta \bm m(s)  \right\}\notag\\
&=2\int_0^{t\wedge \gamma_1} \bm m(s-)\cdot\dd\bm m(s) +  \sum_{0<s\leq t \wedge \gamma_1}(\Delta \bm m(s))^2.
\label{prot}
\end{align}
As all vector entries are bounded, it is easy to show that 
$\textstyle\int_0^{t\wedge \gamma_1} \bm m(s-)\cdot\dd\bm m(s)$, $t\geq0$, is a martingale. 

Next, we identify the adapted increasing bounded variation process, say $\bm\beta$, which is the compensator of $ \sum_{0<s\leq t \wedge \gamma_1}(\Delta \bm m(s))^2$, $t\geq0$ so that 
\[
\norm{\bm m (t)}_2^2 -\bm\beta(t), t\geq 0
\] 
is a martingale with mean  0.

To this end, 
note that $\Delta \bm m(t) = \Delta \bm y(t)$. 
Hence, we have, on the event that $t$ is a time at which the number of blocks of type $i$ decreases,  $(\Delta \bm m(t))^2$ is given by 
\begin{align*}
\chi_i(t)
&:= 
\begin{bmatrix}\dfrac{\bm n(t)-\sigma(t)\bm{e_i}}{(\sigma(t)-1)\sigma(t)}  \\ \\ \dfrac{1}{(\sigma(t)-1)\sigma(t)}\end{bmatrix}
\cdot
\begin{bmatrix}\dfrac{\bm n(t)-\sigma(t)\bm{e_i}}{(\sigma(t)-1)\sigma(t)}  \\ \\ \dfrac{1}{(\sigma(t)-1)\sigma(t)}\end{bmatrix}
\notag\\
&=\frac{\bm n(t)\cdot\bm n(t) -2\sigma(t)  n_i(t) + \sigma(t)^2+1}{\sigma(t)^2(\sigma(t)-1)^2}.
\end{align*}

It is now straightforward to see that, there exists a $C>0$ such that 
\begin{align*}
\bm \beta(t) &=\int_0^{t \wedge \gamma_1}\sum_{i=1}^k
 \chi_i(s)\Bigg[ \sum_{\substack{j=1 \\ j\neq i}}^kn_j(s)n_i(s)C_{ji}+\frac{1}{2}n_i(s)(n_i(s)-1)C_{i,i}\Bigg]\dd s\\
&\leq  C \int_0^{t\wedge \gamma_1}\frac{\sigma(s)^2}{(\sigma(s)-1)^2} \dd s,
\end{align*}
where we have used that \[
\sigma(t)^2 = (n_1(t) +\cdots+n_k(t))^2\geq  \bm{n}(t)\cdot \bm{n}(t) .
\]
Replacing $t$ by $\tau(t)$ and taking expectation gives the desired inequality.
\end{proof}

As the reader may now expect, our ultimate objective is to show that for any sequence of starting initial configurations $\bm \eta^N\in \mathbb{N}^k_*$ such that $\norm{\bm \eta^N}_1\to\infty$ as $N\to\infty$ and $\arg(\bm \eta^N)\to \bm r\in \mathcal{S}^k_+$, the   martingale component $\bm m^\tau$ disappears. This tells us that the behaviour of $(\bm r(t), t\geq0)$ behaves increasingly like the compensator term, which is further key to controlling its behaviour.  To this end, we conclude this section with two more results that provide us with the desired control of the aforesaid martingale component.

\begin{lemma}\label{cor1} Fix $t>0$. Suppose $(\bm \eta^N, N\geq 1)$ tends to $( \bm r_0, 0)$,  
\[
\tau^{-1}(t) =   \int^{t}_0 \sigma(s)\dd s \to \infty, \, \,  \tau(t)\to 0,\, \, \text{ and }\,\, |\tau(t)\wedge \gamma_1-\tau(t)|\to0,
\]
weakly as $N\to\infty$.
\end{lemma}

\begin{proof}
Recall from the proof of Lemma \ref{comingdownfrominfinity}, and specifically \eqref{rhoupper},  there is a death chain $(\nu(t), t\geq0)$, representing the number of blocks in a Kingman coalescent with merger rate $C$  such that, for any $\bm \eta\in\mathbb{N}^k_*$, on the same probability space, we can stochastically bound $\sigma(t)\geq \nu(t)$, $t\geq0$.

\medskip

We now note that, for any large $M>0$ there exists a constant $C>0$ (not necessarily the same as before) such that, for any $m$ sufficiently large,
\begin{align}
\lim_{N\to\infty}\mathbb{P}_{{\bm \eta}^N}\left(\int_0^{\gamma_m} \sigma(s)\dd s> M\right) 
&\geq \lim_{N\to\infty}\mathbb{P}_{{\bm \eta}^N}\left(\int_0^{\beta_m} \nu(s)\dd s> M\right) \notag\\
&= \Pr\left(\sum_{n = m+1}^\infty \frac{n}{C{n\choose 2}}\mathbf{e}^{(n)}_{1} >M\right)\notag\\
&=\Pr\left(\sum_{n = m+1}^\infty \frac{1}{n-1}\mathbf{e}^{(n)}_{1} >\frac{CM}{2}\right),
\label{eq1}
\end{align}
where $\beta_m = \inf\{t>0: \nu(t) = m\}$ and $({\bf e}^{n}_1, n\geq 1)$, is a sequence of iid unit-mean exponentially distributed random variables.
If we write $(N(t), t\geq 0)$ for a Poisson process with unit rate, then almost surely we have
\[
\sum_{n = m+1}^\infty \frac{1}{n-1}\mathbf{e}^{(n)}_{1} = \int_0^\infty \frac{1}{N(s)+m}\dd s 
=\int_0^\infty \frac{s}{N(s)+m}\frac{\dd s}{s} =\infty
\]
where  the final equality follows by the strong law of large numbers for Poisson processes. As such the right-hand side of \eqref{eq1} is equal to 1.

\medskip

Since $M$ and $m$ can be arbitrarily large, this shows the first claim as soon as we note that $\tau^{-1}(t) = \int_0^t \sigma(u)\dd u$, which  is an easy consequence of  \eqref{dt}. On the other hand, note that since 
$
\int_0^{\tau(t)} \sigma(s)\dd s = t,
$
when $\sigma(0)= {\bm \eta}^N \rightarrow\infty$, the above comparison with Kingman's coalescent shows that  almost surely that, since $\int_0^u\sigma(s)\dd s $ converges weakly to infinity for all $u>0$, then $\tau(t)$ converges weakly to 0. Indeed, if with positive probability $\tau(t)>\varepsilon$ in the limit as $N\to\infty$, then, on that event, 
$\int_0^{\tau(t)} \sigma(s)\dd s\geq \int_0^{\varepsilon} \sigma(s)\dd s$, which explodes in distribution. This in turn contradicts the definition of $\tau(t)$. This proves the second and third statements of the lemma.
\end{proof}

%
%


Since the second moment of the martingale $\bm m^\tau$ can be controlled by its associated time change, we also get a helpful $L^2$ corollary from Lemma \ref{cor1}.
\begin{cor}\label{cor2}
From Lemma \ref{2upper} and Lemma \ref{cor1}, we deduce that, if $(\bm \eta^N, N\geq 1)$ tends to $( \bm r_0, 0)$, then, for each $t>0$
\[
\lim_{N\to\infty}\mathbb{E}_{\bm \eta^N}\left[\sup_{s\leq t}\norm{\bm m^\tau(s) }^2_2\right] =0
\]
\end{cor}
\begin{proof}
We have from Lemma \ref{2upper} and a change of variable similar to \eqref{dt2}
that
\begin{align*}
\mathbb{E}_{\bm \eta^N}\left[\norm{\bm m^\tau(t) }^2_2\right] &\leq C\mathbb{E}_{\bm\eta^N}\left[ \int_0^{\tau(t  )\wedge\gamma_1}\frac{\sigma(s)^2}{(\sigma(s)-1)^2} \dd s\right]\notag\\
& = C\mathbb{E}_{\bm\eta^N}\left[ \int_0^{t\wedge\tau^{-1}(\gamma_1)}\frac{\sigma(\tau(u))^2}{(\sigma(\tau(u))-1)^2}\frac{1}{\sigma(\tau(u))} \dd u\right]\notag\\
& \leq C t\mathbb{E}_{\bm \eta^N}\left[\frac{1}{\sigma(\tau(t)) }\right].
\end{align*}
We can choose $N$ sufficiently large such that $\tau(t)$ is less than $\delta$ with probability at least $1-\epsilon$ so that, 
\begin{align*}
\mathbb{E}_{\bm \eta^N}\left[\frac{1}{\sigma(\tau(t))} \right]&\leq \mathbb{E}_{\bm \eta^N}\left[\frac{1}{\sigma(\delta)}  ; \tau(t)< \delta\right] +  \mathbb{P}_{\bm \eta^N}(\tau(t)\geq \delta)\notag \\
&\leq\delta \mathbb{E}_{\bm \eta^N}\left[\frac{1}{\delta\nu(\delta)}\right] + \epsilon,
\end{align*}
where we have again compared with a lower bounding Kingman coalescent $(\nu(t), t\geq0)$ on the same space, as in Lemma \ref{cor1}. Recall the classical result for Kingman's coalescent coming down from infinity that, when the collision rate is $C>0$,
\[
\delta\nu(\delta)\to 2/C
\]
 almost surely as $\delta\to 0$; cf \cite{Brazil}.  We can now easily conclude with the help of dominated convergence that 
\[
\lim_{N\to\infty}\mathbb{E}_{\bm \eta^N}\left[\norm{\bm m^\tau(t)}^2_2\right]  = 0,
\]
and this concludes the proof once we invoke Doob's submartingale inequality. 
\end{proof}

\section{Proof of Theorem \ref{prereplicator}}\label{prepostproofs}
Recall that we have required from the $\bm A$-replicator equation  that  $\bm x(t)\to \bm x^*$ holds. Reinterpreting \eqref{originalrepeq} in its integral form, this tells us that 
\begin{equation}\label{mild}
x_i(t) = x_i(0) +\int_0^t x_i(s)([{\bm A} {\bm x}(s)]_i-{\bm x}(s)^T{\bm A} {\bm x}(s))\dd s, \qquad t\geq0.
\end{equation}
This representation makes it easier to give the heuristic basis of the proof of Theorem \ref{prereplicator}.

\medskip

Following our earlier heuristic reasoning, we can now see that, under $\mathbb{P}_{\bm\eta}$ as $\norm{\bm\eta}_1\to\infty$ the integrand in the expression for $\bm\alpha^\tau$ appears to have a similar structure to the replicator equations \eqref{originalrepeq}. That is, under $\mathbb{P}_{\bm\eta}$ as $\norm{\bm\eta}_1\to\infty$ and as $t\to 0$,
\[
\frac{\dd \bm\alpha^\tau(t)}{\dd t}\approx 
\begin{bmatrix}\bm\theta(t)\\ \\ 0 \end{bmatrix}
\] 
where
\[
\bm\theta_i(t) = { r}_i(\tau(t) )\Big([\bm A\bm r(\tau(t))]_i  -\bm r(\tau(t))^T\bm A\bm r(\tau(t)))       \Big).
\]
On the other hand, if we can show that $\bm m^\tau\to0$ as $\norm{\bm\eta}_1\to\infty$, then, since $\bm y^\tau = \bm  m^\tau + \bm\alpha^\tau $, 
reading off the first component of $\bm y^\tau$, i.e. $\bm R(t): = \bm r(\tau(t))$, roughly speaking we see that 
\[
R_i(t) \approx  r_i(0)+  \int_0^t  R_i(s )\Big([\bm A\bm R(s)]_i  -\bm R(s)^T\bm A\bm R(s))       \Big)\dd s,
\]
as $\norm{\bm\eta}_1\to\infty$, given that Corollary \ref{cor2} shows the martingale component is negligible. In other words, the process $(\bm R (t), t\geq0)$, begins to resemble the replicator equation in its integral form \eqref{mild}. It now looks like a reasonable conjecture that $\bm R(t)\to \bm x^*$, just as the solution to the replicator equation does. 

\medskip

Let us thus move to the proof of Theorem \ref{prereplicator}, which, as alluded to earlier, boils down to the control we have on  the martingale $\bm m^\tau$ under $\mathbb{P}_{\bm\eta}$ as $\norm{\bm \eta}_1\to\infty$, thanks to Corollary \ref{cor2}.

\begin{proof}[Proof of Theorem \ref{prereplicator}]
Write $\bm R_i(t) = \bm r(\tau(t))$ on the event  
\[
A_t := \{t<\tau^{-1}(\gamma_1) \},\qquad  t\geq0,
\]
 and note from Lemma \ref{cor1}  and the proof of Lemma \ref{comingdownfrominfinity} that $\lim_{N\to\infty}\mathbb{P}_{\bm \eta^N}(A_t)=1$. We have, for each $T>0$,
\begin{align}
&\mathbb{E}_{\bm \eta^N}\Bigg[\sup_{t\leq T}\Bigg\|
{\bm R}(t) - {\bm r}(0)- \int_0^t \Big(\sum_{i=1}^k {\bm e}_iR_i(s)[\bm A\bm R(s)]_i  - (\bm R(s)^T\bm A\bm R(s) ) \bm R(s)      \Big)\dd s  \Bigg\|_1 \mathbf{1}_{A_t}\Bigg]
\notag\\
&\leq \mathbb{E}_{\bm \eta^N}\Bigg[\sup_{t\leq T} \Bigg\|{\bm R}(t) - {\bm r}(0)- \int_0^t \frac{\sigma(\tau(s))}{\sigma(\tau(s))-1}
\notag\\
&\hspace{3.5cm}\sum_{i = 1}^k 
(\bm R (s)-\bm{e_i} )   R_i(s) [ \sigma(\tau(s))^{-1} {\rm diag}(\bm A)\bm{1}- \bm A\bm R(s) ]_i \,\dd s
\Bigg\|_1\mathbf{1}_{A_t}\Bigg] \notag\\
&+ \mathbb{E}_{\bm \eta^N}\Bigg[\sup_{t\leq T} \Bigg\|  \int_0^t \frac{1}{\sigma(\tau(s))-1}\Big(\sum_{i=1}^k {\bm e}_iR_i(s)[\bm A\bm R(s)]_i  - (\bm R(s)^T\bm A\bm R(s) ) \bm R(s)      \Big)\,\dd s
\Bigg\|_1\mathbf{1}_{A_t}\Bigg]\notag\\
&+ \mathbb{E}_{\bm \eta^N}\Bigg[\sup_{t\leq T} \Bigg\|\int_0^t \frac{1}{\sigma(\tau(s))-1}
\Big(
\bm ({\bm R}(s)^T{\rm diag}(\bm A) \bm 1)\bm R(s)
- \sum_{i=1}^k {\bm e}_iR_i(s)[{\rm diag}(\bm A) \bm 1]_i
\Big)
\Bigg\|_1\mathbf{1}_{A_t}\Bigg].
\label{whopper}
\end{align}
From Corollary \ref{cor2}, and the fact that $\norm{\bm y}_1\leq \sqrt{k}\norm{\bm y}_2$, for $\bm y\in \mathbb{R}^k$, the first term after the inequality tends to zero as $N\to\infty$. Up to a multiplicative constant, the second and third terms after the inequality can be bounded by 
\[
T\mathbb{E}_{\bm \eta^N}\Bigg[\frac{1}{(\sigma(\tau(T)) -1)}\wedge1\Bigg],
\]
where we have used the monotonicity of $\tau(\cdot)$ and $\sigma(\cdot)$.
As noted in the proof of Corollary \ref{cor2}, the latter tends to zero as $N\to\infty$.

It now follows that 
\begin{equation}
\lim_{N\to\infty}\mathbb{E}_{\bm \eta^N}\Bigg[\sup_{t\leq T}\Bigg\|
{\bm R}(t) - {\bm r}(0)- \int_0^t \sum_{i=1}^k {\bm e}_iR_i(s) [\bm A\bm R(s)]_i  - (\bm R(s)^T\bm A\bm R(s)  )\bm R(s)  \dd s  \Bigg\|_1\mathbf{1}_{A_t}\Bigg]=0.
\label{newnormcgce}
\end{equation}

As all of the vectorial and matrix terms in \eqref{newnormcgce} are bounded, it is also easy to see with the help of Lemma \ref{cor1} that 
\begin{align*}
&\lim_{N\to\infty}\mathbb{E}_{\bm \eta^N}\Bigg[\sup_{t\leq T}\Bigg\|
{\bm R}(t) - {\bm r}(0)- \int_0^t \sum_{i=1}^k {\bm e}_iR_i(s) [\bm A\bm R(s)]_i  - (\bm R(s)^T\bm A\bm R(s)  )\bm R(s)  \dd s  \Bigg\|_1\mathbf{1}_{A_t^c}\Bigg]\notag\\
&\leq \lim_{N\to\infty}C T \mathbb{P}_{\bm \eta^N}(A_T^c)=0,
\end{align*}
for some constant $C>0$, which gives us 
\begin{equation}
\lim_{N\to\infty}\mathbb{E}_{\bm \eta^N}\Bigg[\sup_{t\leq T}\Bigg\|
{\bm R}(t) - {\bm r}(0)- \int_0^t \sum_{i=1}^k {\bm e}_iR_i(s) [\bm A\bm R(s)]_i  - (\bm R(s)^T\bm A\bm R(s)  )\bm R(s)  \dd s  \Bigg\|_1\Bigg]=0.
\label{newnormcgce2}
\end{equation}

Consider the replicator equation initiated from any $\bm r(0)\in \mathcal{S}^k_+$. Similarly to \eqref{mild}, albeit in vectorial form, we can write the solution to \eqref{originalrepeq}, when issued from $\bm r(0)$ as
\begin{equation}
\bm x(t) - \bm r(0) - \int_0^t\sum_{i=1}^k {\bm e}_i x_i(s)[{\bm A} {\bm x}(s)]_i-({\bm x}(s)^T{\bm A} {\bm x}(s))\bm x(s)\dd s=0.
\label{mildform}
\end{equation}
Subtracting \eqref{mildform} off  in  \eqref{newnormcgce2}, we see that, for each $T>0$,
\begin{align}
& \mathbb{E}_{\bm \eta^N}\left[  \sup_{t\leq T}\norm{\bm R(t)- \bm x(t)}_1\right]\notag\\
&\leq  \mathbb{E}_{\bm \eta^N}\Bigg[\sup_{t\leq T}\Bigg\|
{\bm R}(t) - { \bm r}(0)- \int_0^t \sum_{i =1}^k \bm  e_iR_i(s)\Big([\bm A\bm R(s)]_i  - \bm R(s)^T\bm A\bm R(s)    \Big)\dd s  \Bigg\|_1 \Bigg]\notag\\
&+\int_0^T \sum_{i =1}^k 
 \mathbb{E}_{\bm \eta^N}\left[ 
|R_i(s)- x_i(s)| \Big|[\bm A\bm R(s)]_i  - \bm R(s)^T\bm A\bm R(s)    \Big| 
  \right]\dd s\notag\\
&+\int_0^T \sum_{i =1}^k 
x_i(s)  \mathbb{E}_{\bm \eta^N}\left[ 
\Big|[\bm A(\bm R(s)-\bm x(s))]_i  - \bm R(s)^T\bm A(\bm R(s)-\bm x(s))    \Big| 
\right] \dd s\notag\\
&+\int_0^T \sum_{i =1}^k 
x_i(s)
 \mathbb{E}_{\bm \eta^N}\left[ 
 \Big| (\bm R(s)-\bm x(s))^T\bm A\bm x(s)    \Big| 
\right]\dd s .
\label{alongthelinesof}
\end{align}
Noting that $0\leq R_i(s), x_i(s) \leq 1$ for all $i =1\cdots, k$, $s\geq 0$, we have, 
\begin{align}
& \mathbb{E}_{\bm \eta^N}\left[ \sup_{t\leq T} \norm{\bm R(t)- \bm x(t)}_1 \right]\notag\\
&\leq   \mathbb{E}_{\bm \eta^N}\Bigg[\sup_{t\leq T}\Bigg\|
{\bm R}(t) - { \bm r}(0)- \int_0^t \sum_{i =1}^k \bm  e_iR_i(s)\Big([\bm A\bm R(s)]_i  - \bm R(s)^T\bm A\bm R(s)    \Big)\dd s \Bigg\|_1 \Bigg]\notag\\
&\hspace{2cm}+C\int_0^T \mathbb{E}_{\bm \eta^N}\left[ \sup_{u\leq s}\norm{\bm R(u)- \bm x(u)}_1 \right] \dd s,
\label{pregron}
\end{align}
where $C>0$ is an unimportant constant.
Hence, using \eqref{newnormcgce2}, the monotonicity of norms and dominated convergence,
\begin{align*}
u(T)&:=\lim_{N\to\infty}\sup_{N'\geq N} \mathbb{E}_{\bm \eta^N}\left[  \sup_{t\leq T}\norm{\bm R(t)- \bm x(t)}_1 \right]\notag\\
&\leq C\int_0^T\lim_{N\to\infty}\sup_{N'\geq N} 
\mathbb{E}_{\bm \eta^N}\left[  \sup_{u\leq s}\norm{\bm R(u)- \bm x(u)}_1 \right] \dd s\notag\\
& = C\int_0^T u(s)\dd s,
\end{align*}
where $C>0$ is an unimportant constant.
Gr\"onwall's Lemma now tells us that 
\begin{equation}
\lim_{N\to\infty}\mathbb{E}_{\bm \eta^N}\left[ \sup_{t\leq T} \norm{\bm R(t)- \bm x(t)}_1 \right] = 0.
\label{onAt}
\end{equation}

By taking $t\to\infty$, we easily deduce that 
\[
\lim_{t\to\infty}\lim_{N\to\infty}
\mathbb{E}_{\bm \eta^N}\left[  \norm{\bm R(t)- \bm x^* }_1 \right]
\leq \lim_{t\to\infty}\lim_{N\to\infty}\mathbb{E}_{\bm \eta^N}\left[  \norm{\bm R(t)- \bm x(t)}_1 \right]
+\lim_{t\to\infty}\norm{\bm x(t)-\bm x^*}_1 =0.
\]
This completes the proof of the theorem.
\end{proof}

\section{Concluding remarks}\label{remarks}

For the purpose of the following discussion, we can assume that $C_{i,j}>0$ for all $i,j\in\{1,\cdots,k\}$.  
In further work, one may pursue the issue of Skorokhod continuity with respect to any  `entrance laws at infinity' that the process can come down from.

\medskip
Suppose  $\mathbb{D}$ is the space of   c\`adl\`ag paths from $[0,\infty)$ to $\mathbb{N}^k_*$, with  $\mathcal{D}$ as the Borel sigma algebra on $\mathbb{D}$ generated from the usual Skorokhod metric. 
A Markovian definition of coming down from infinity would require the existence of a
law  $\mathbb{P}^\infty$ on $(\mathbb{D}, \mathcal{D})$
which is consistent with $\mathbb{P}$ in the sense that, 
\[
\mathbb{P}^\infty(\bm n(t+s) = \bm n)
= \sum_{\substack{\bm n'\in \mathbb{N}^k_*\\ \norm{\bm n'}_1\geq \norm{\bm n}_1}} \mathbb{P}^\infty(\bm n(t) = \bm n') \mathbb{P}_{\bm n'}(\bm n(s) = \bm n)
, \qquad s, t>0, \bm n\in \mathbb{N}^k_*,
\]
 $\mathbb{P}^\infty(\sigma(t)<\infty)=1$, for all $t>0$, and $\mathbb{P}^\infty(\lim_{t\downarrow0}\sigma(t) = \infty)=1$.
 
 \medskip
 
As there is no single point on the boundary of $\mathbb{N}^k_*$ that represents an appropriate `infinity' to come down from,
one of the  associated issues is whether a unique entrance law exists or whether e.g. there is an entrance law of $( \bm r, \sigma^{-1})$ for each `infinity'  of  the form $(\bm r_0, 0)$, where $\bm r_0\in\mathcal{S}^k_+$. We believe the latter to hold.
\begin{conjecture}\label{conj0} An entrance law, say  $\mathbb{P}^{(\bm r_0, 0)}$, exists for each  $\bm{r}_0\in\mathcal{S}^k_+$. 
\end{conjecture}

 There is also the question of how we can see these different entrance laws in terms of the behaviour of the process at arbitrarily small times.
The following conjecture suggests that looking backwards in time, it will be difficult to differentiate between the different entrance laws proposed in Conjecture \ref{conj0}.

\medskip

\begin{conjecture}\label{conj1}Suppose $(\bm \eta^N, N\geq 1)$ tends to $( \bm r_0, 0)$ for some $\bm r_0\in\mathcal{S}^k_+$. We have
\[
\lim_{m\to\infty} \lim_{N\to\infty}\mathbb{E}_{{\bm \eta}^N}\left[\norm{ \bm r(\gamma_m)-\bm x^*}_1\right]=0, 
 \]
where we recall
 $
 \gamma_m=\inf\{t>0: \sigma(t) \leq m\},
 $ for $m\geq1$.
\end{conjecture}

In contrast to Theorem \ref{prereplicator}, Conjecture \ref{conj1} claims that, moving backwards through time  towards the instantaneous event at which the replicator coalescent comes down from infinity at the origin of time, the process $\bm r$ necessarily approaches  $\bm x^*$. As such, working backwards in time, the replicator coalescent never gets to see the `initial state' $(\bm r_0,0)$, from which its entrance law is constructed.

\medskip

Put together Theorems \ref{prereplicator} and Conjecture \ref{conj1} are really claiming  that $\bm x^*$ is a `bottleneck' for the replicator coalescent. Figure 1 simulates an example where $k =3$, in which the bottleneck phenomenon can clearly be seen.

\medskip
Theorem \ref{prereplicator} also shows that  under $\mathbb{P}_{\bm\eta^N}$ as $\bm \eta^N\to(\bm r_0,0)$, in an arbitrarily small amount of time (on the  natural time scale of the original Markov process), the process $\bm r$ will effectively jump from $\bm r_0$ to $\bm x^*$. Taking Conjecture \ref{conj1} into account, it is for this reason we pose our final conjecture, below.

\begin{conjecture}\label{conj2} Suppose $(\bm \eta^N, N\geq 1)$ tends to $( \bm r_0, 0)$, for some $\bm r_0\in\mathcal{S}^k_+$. Then, $\lim_{N\to\infty}\mathbb{P}_{\bm\eta^N} \to \mathbb{P}^{(\bm r_0, 0)}$ continuously on $(\mathbb{D}, \mathcal{D})$ if and only if  $\bm r_0 = \bm x^*$. 
\end{conjecture}

\section*{Acknowledgements}
LP was visiting TR and AEK as part of a doctoral visiting programme sponsored by the Internationalisation Research Office at the University of Bath. She would like to thank the university for their support. TR would like to thank Chris Guiver for useful discussions. AEK and TR acknowledge EPSRC grant support from  EP/S036202/1. Referees provided some extremely helpful feedback leading to the improvement of an earlier version of this document. AEK would also like to thank Jon Warren and Jason Schweinsberg for insightful discussion.

\bibliography{references}{}
\bibliographystyle{plain}




\end{document}